\def\ds{\displaystyle}
\def\e{{\varepsilon}}
\def\div{\mathrm{div}\,}
\def\R{\mathbb{R}}
\def\N{\mathbb{N}}
\def\LL{\mathcal{L}}
\def\RR{\mathcal{R}}
\newtheorem{theorem}{Theorem}[section]
\newtheorem{proposition}[theorem]{Proposition}
\newtheorem{definition}[theorem]{Definition}
\newtheorem{lemma}[theorem]{Lemma}
\theoremstyle{remark}
\newtheorem{remark}[theorem]{Remark}
\title[Weak Optimal Controls for Linear Elliptic Problems]{Weak Optimal Controls in Coefficients for Linear Elliptic Problems}
\author{Giuseppe Buttazzo\qquad\qquad Peter\ I. Kogut}
\address{Giuseppe Buttazzo: Dipartimento di Matematica,
        Universit\`a di Pisa,
        Largo B. Pontecorvo 5,
        56127 Pisa, Italy\quad{\tt buttazzo@dm.unipi.it}}
\address{Peter\ I. Kogut: Department of Differential Equations,
        Dnipropetrovsk National University,
        Naukova str. 13,
        49050 Dnipropetrovsk,
        Ukraine\quad{\tt p.kogut@i.ua}}
\begin{document}

\maketitle

\begin{abstract}
In this paper we study an optimal control problem associated to a linear degenerate elliptic equation with mixed boundary conditions. The equations of this type can exhibit the Lavrentieff phenomenon and non-uniqueness of weak solutions. We adopt the weight function as a control in $L^1(\Omega)$. Using the direct method in the Calculus of variations, we discuss the solvability of this optimal control problem in the class of weak admissible solutions.
\end{abstract}

\textbf{Keywords:} degenerate elliptic equations, control in coefficients, weighted Sobolev spaces, Lavrentieff phenomenon, calculus of variations

\textbf{2000 Mathematics Subject Classification:} 35J70, 49J20, 49J45, 93C73.

%%%%%%%%%%%%%%%%%%%%%%%%%%%%%%%%%%%%%%%%%%%%%%%%%%%%%%%%%%%%%%%%%
%%%%%%%%%%%%%%%%%%%%%%%%%%%%%%%%%%%%%%%%%%%%%%%%%%%%%%%%%%%%%%%%%
\section{Introduction}\label{Sec 0}

The aim of this work is to study the existence of optimal solutions in coefficients associated to a linear degenerate elliptic equation with mixed boundary condition. By control variable we mean a weight coefficient in the main part of the elliptic operator. The precise answer of existence or none-existence of an $L^1$-optimal solutions is heavily depending on the class of admissible controls. Here are the main questions: what is the right setting of optimal control problem with $L^1$-controls in coefficients, and what is the right class of admissible solutions to the above problem? Using the direct method in the Calculus of variations, we discuss the solvability of this optimal control problem in the so-called class of weak admissible solutions.

In this paper we deal with an optimal control problem in coefficients for boundary value problem of the form
\begin{equation}\label{0.1}
\left\{
\begin{array}{ll}
-\div\big(\rho(x)\nabla y\big)+y = f &\text{ in }\Omega,\\
y=0 &\text{ on }\Gamma_D,\\
\rho(x)\frac{\partial y}{\partial\nu}=0 &\text{ on }\Gamma_N,
\end{array}
\right.
\end{equation}
where $f\in L^2(\Omega)$ is a given function, the boundary of $\Omega$ is made of two disjoint parts $\partial\Omega=\Gamma_D\cup\Gamma_N$, and $\rho$ is a measurable non negative weight on a bounded open domain $\Omega$ in $\R^N$.

Several physical phenomena related to equilibrium of continuous media are modeled by this elliptic problem. In order to be able to handle with media which possibly are somewhere ``perfect'' insulators or ``perfect'' conductors (see \cite{DauLions}) we allow the weight $\rho$ to vanish somewhere or to be unbounded.

Though numerous papers (see, for instance, \cite{CALDIROLI:00,CHABROWSKI,CIRMI,KovGorb_07,MURTHY,Past,ZhPa1} and references therein) are devoted to variational and non variational approaches to problems related to \eqref{0.1}, only few papers deal with optimal control problems for degenerate partial differential equations (see for example \cite{BOUCHITTE, ButtVarch}). This can be explained by several reasons. Firstly, boundary value problem \eqref{0.1} for locally integrable weight function $\rho$ may exhibit the Lavrentieff phenomenon, the non-uniqueness of weak solutions, as well as other surprising consequences. So, in general, the mapping $\rho\mapsto y(\rho)$ can be multivalued. Besides, the characteristic feature of this problem is the fact that for every admissible control function $\rho$ with properties prescribed above, the weak solutions of \eqref{0.1} belong to the corresponding weighted Sobolev space $W^{1,p}(\Omega,\rho\,dx)$. In addition, even if the original elliptic equation is non-degenerate, i.e. admissible controls $\rho$ are such that $\rho(x)\ge \alpha>0$, the majority of optimal control problems in coefficients have no solution (see for instance \cite{Mu1}).

The optimal control problem we consider in this paper is closely related to the optimal reinforcement of an elastic membrane \cite{ButtVarch}. Reinforcing an elastic structure subjected to a given load is a problem which arises in several applications. The literature on the topic is very wide; for a clear description of the problem from a mechanical point of view and a related bibliography we refer for instance to the beautiful paper by Villaggio \cite{Villaggio}.

In the simplest case when we have an elastic membrane occupying a domain $\Omega$ and subjected to a given exterior load $f\in L^2(\Omega)$, the shape $u$ of the membrane in the equilibrium configuration is characterized as the solution of the partial differential equation
$$-\div\big(\rho(x)\nabla y\big)+y = f\quad\text{in }\Omega$$
together with the corresponding Dirichlet and Neumann boundary conditions on $\partial\Omega$. The reinforcement of the membrane is usually performed by the addition of suitable stiffeners, whose total amount is prescribed. Mathematically, this is described by a nonnegative coefficient $\rho(x)$ which acts in $\Omega$ and is associated with some weight coefficient in the main part of elliptic operator. As a result, the problem of finding an optimal reinforcement for the membrane then consists in the determination of a weight $\rho(x)\ge 0$ which optimizes a given cost functional. In contrast to the paper \cite{ButtVarch}, we do not restrict of our analysis to the particular case of the reinforcement problems. We also do not make use of any relaxations for the original optimal control problem.

%%%%%%%%%%%%%%%%%%%%%%%%%%%%%%%%%%%%%%%%%%%%%%%%%%%%%%%%%%%%%%%%%
%%%%%%%%%%%%%%%%%%%%%%%%%%%%%%%%%%%%%%%%%%%%%%%%%%%%%%%%%%%%%%%%%
\section{Notation and Preliminaries}\label{Sec_1}

Let $\Omega$ be a bounded open subset of $\R^N$ ($N\ge1$) with a Lipschitz boundary. We assume that the boundary of $\Omega$ is made of two disjoint parts
$$\partial\Omega=\Gamma_D\cup\Gamma_N$$
with Dirichlet boundary conditions on $\Gamma_D$, and Neumann boundary conditions on $\Gamma_N$. Let $\chi_E$ be the characteristic function of a subset $E\subset\Omega$, i.e. $\chi_E(x)=1$ if $x\in
E$, and $\chi_E(x)=0$ if $x\not\in E$.

Let $C^\infty_0(\R^N;\Gamma_D)=\left\{\varphi\in C^\infty_0(\R^N)\ :\ \varphi=0\text{ on }\Gamma_D\right\}$. The space $W^{1,1}(\Omega;\Gamma_D)$ is the closure of $C^\infty_0(\R^N;\Gamma_D)$ in
the classical Sobolev space $W^{1,1}(\Omega)$. For any subset $E\subset\Omega$ we denote by $|E|$ its $N$-dimensional Lebesgue measure $\LL^N(E)$.

Let $\rho:\R^N\to\R$ be a locally integrable function on $\R^N$ such that $\rho(x)\ge0$ for a. e. $x\in\R^N$. Then $\rho$ gives rise to a measure on the measurable subsets of $\R^N$ through integration. This measure will also be denoted by $\rho$. Thus $\rho(E)=\int_E\rho\,dx$ for measurable sets $E\subset\R^N$.

We will use the standard notation $L^2(\Omega,\rho\,dx)$ for the set of measurable functions $f$ on $\Omega$ such that
$$\|f\|_{L^2(\Omega,\rho\,dx)}=\Big(\int_\Omega f^2\rho\,dx\Big)^{1/2}<+\infty.$$
We say that a locally integrable function $\rho:\R^N\to\R_+$ is a weight on $\Omega$ if
\begin{equation}\label{1.1}
\rho+\rho^{-1}\in L^1_{loc}(\R^N).
\end{equation}
Note that in this case the functions in $L^2(\Omega,\rho\,dx)$ are Lebesgue integrable on $\Omega$.

To each weight function $\rho$ we may associate two weighted Sobolev spaces:
$$W_\rho=W(\Omega,\rho\,dx)\ \text{ and }\   H_\rho=H(\Omega,\rho\,dx),$$ 
where $W_\rho$ is the set of functions $y\in W^{1,1}(\Omega;\Gamma_D)$ for which the norm
\begin{equation}\label{1.2}
\|y\|_\rho=\Big(\int_\Omega\big(y^2+\rho\,|\nabla y|^2\big)\,dx\Big)^{1/2}
\end{equation}
is finite, and $H_\rho$ is the closure of $C^\infty_0(\Omega;\Gamma_D)$ in $W_\rho$. Note that due to the estimates
\begin{gather}\label{1.2.0}
\int_\Omega|y|\,dx\le\Big(\int_\Omega|y|^2\,dx\Big)^{1/2}|\Omega|^{1/2}\le C\|y\|_\rho,\\
\label{1.2.00}
\int_\Omega|\nabla y|\,dx\le\Big(\int_\Omega |\nabla y|^2 \rho\,dx\Big)^{1/2}\Big(\int_\Omega\rho^{-1}\,dx\Big)^{1/2}\le C\|y\|_\rho,
\end{gather}
the space $W_\rho$ is complete with respect to the norm $\|\cdot\|_\rho$. It is clear that $H_\rho\subset W_\rho$, and $W_\rho$, $H_\rho$ are Hilbert spaces. If $\rho$ is bounded between two positive constants, then it is easy to verify that $W_\rho=H_\rho$. However, for a ``typical'' weight $\rho$ the space of smooth functions $C_0^\infty(\Omega)$ is not dense in $W_\rho$. Hence the identity $W_\rho=H_\rho$ is not always valid (for the corresponding examples we refer to \cite{Chiado_94, Zh_98}).

\bigskip

\textit{Weak Compactness Criterion in $L^1(\Omega)$.} Throughout the paper we will often use the concepts of the weak and strong convergence in $L^1(\Omega)$. Let $\left\{a_\e\right\}_{\e>0}$ be a sequence in $L^1(\Omega)$. We recall that $\left\{a_\e\right\}_{\e>0}$ is called equi-integrable if for any $\delta>0$ there is $\tau=\tau(\delta)$ such that $\int_S |a_\e|\,dx<\delta$ for every measurable subset $S\subset\Omega$ of Lebesgue measure $|S|<\tau$. Then the following assertions are equivalent:
\begin{enumerate}
\item[(i)] a sequence $\left\{a_\e\right\}_{\e>0}$ is weakly compact in $L^1(\Omega)$;
\item[(ii)] the sequence $\left\{a_\e\right\}_{\e>0}$ is equi-integrable;
\item[(iii)] given $\delta>0$ there exists $\lambda=\lambda(\delta)$ such that $\ds\sup_{\e>0}\int_{\{|a_\e|>\lambda\}}|a_\e|\,dx<\delta$.
\end{enumerate}

\begin{theorem}[Lebesgue's Theorem]\label{Th_1.3}
If a sequence $\left\{a_\e\right\}_{\e>0}\subset L^1(\Omega)$ is equi-integ\-rable and $a_\e\to a$ almost everywhere in $\Omega$ then $a_\e\to a$ in $L^1(\Omega)$.
\end{theorem}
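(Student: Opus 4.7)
The plan is to prove this via Egorov's theorem combined with the equi-integrability hypothesis, in essentially the classical Vitali argument. Note that the hypothesis of bounded measure on $\Omega$ is in force (since $\Omega$ is a bounded open subset of $\R^N$), which is what makes the argument go through.

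First I would establish that $a\in L^1(\Omega)$. Equi-integrability together with $|\Omega|<\infty$ yields a uniform $L^1$-bound on $\{a_\e\}$: indeed, picking $\tau=\tau(1)$ from the equi-integrability condition with $\delta=1$, one covers $\Omega$ by finitely many measurable pieces of measure less than $\tau$ and adds the resulting estimates, so $\sup_\e\|a_\e\|_{L^1(\Omega)}<\infty$. Applying Fatou's lemma to $|a_\e|\to|a|$ a.e.\ then gives $a\in L^1(\Omega)$.

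Next, fix $\delta>0$ and let $\tau=\tau(\delta/3)$ come from the equi-integrability of $\{a_\e\}$. By Egorov's theorem, since $a_\e\to a$ a.e.\ on the finite-measure set $\Omega$, there exists a measurable set $E_\delta\subset\Omega$ with $|\Omega\setminus E_\delta|<\tau$ such that $a_\e\to a$ uniformly on $E_\delta$. I would then split
\[
\int_\Omega|a_\e-a|\,dx=\int_{E_\delta}|a_\e-a|\,dx+\int_{\Omega\setminus E_\delta}|a_\e|\,dx+\int_{\Omega\setminus E_\delta}|a|\,dx.
\]
The first term is bounded above by $|\Omega|\cdot\sup_{E_\delta}|a_\e-a|$, which is less than $\delta/3$ for all $\e$ sufficiently small by the uniform convergence. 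The second term is less than $\delta/3$ for all $\e$ by the choice of $\tau$ and equi-integrability of $\{a_\e\}$.

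The main point to handle carefully is the third term. I would observe that $a\in L^1(\Omega)$ implies absolute continuity of the integral $E\mapsto\int_E|a|\,dx$, so after possibly shrinking $\tau$ to some $\tau'\le\tau$ so that any set of measure less than $\tau'$ has $\int|a|\,dx<\delta/3$, and re-running Egorov with this smaller threshold, the third term is also bounded by $\delta/3$. Combining the three estimates yields $\limsup_{\e\to 0}\|a_\e-a\|_{L^1(\Omega)}\le\delta$, and since $\delta>0$ is arbitrary, the conclusion follows. The only real subtlety is this coordinated shrinking of the exceptional set to control both $\{a_\e\}$ (via equi-integrability) and $a$ (via absolute continuity of its integral) simultaneously.
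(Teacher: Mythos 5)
Your argument is correct: it is the classical Vitali/Egorov proof of this result (which, despite the paper's label ``Lebesgue's Theorem,'' is really the Vitali convergence theorem), and the paper itself states it in the preliminaries without any proof, so there is no authorial argument to compare against. The one step you rightly flag as delicate --- shrinking the exceptional set so that both the equi-integrability of $\{a_\e\}$ and the absolute continuity of $E\mapsto\int_E|a|\,dx$ apply simultaneously --- is handled correctly; alternatively, the third term can be absorbed without invoking absolute continuity by applying Fatou's lemma on $\Omega\setminus E_\delta$ to get $\int_{\Omega\setminus E_\delta}|a|\,dx\le\liminf_\e\int_{\Omega\setminus E_\delta}|a_\e|\,dx<\delta/3$ directly from equi-integrability.
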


\bigskip

\textit{Radon measures.} By a nonnegative Radon measure on $\Omega$ we mean a nonnegative Borel measure which is finite on every compact subset of $\Omega$. The space of all nonnegative Radon measures on $\Omega$ will be denoted by $M_+(\Omega)$. According to the Riesz theory, each Radon measure $\mu\in M_+(\Omega)$ can be interpreted as element of the dual of the space $C_0(\Omega)$ of all continuous functions vanishing at infinity. Let $M(\Omega;\R^N)$ denotes the space of all $\R^N$-valued Borel measures. Then $\mu=(\mu_1,\dots,\mu_N)\in M(\Omega;\R^N)$ $\Leftrightarrow$ $\mu_i\in C^\prime_0(\Omega)$, $i=1,\dots,N$.

If $\mu$ is a nonnegative Radon measure on $\Omega$, we will use $L^r(\Omega,d\mu)$, $1\le r\le\infty$, to denote the usual Lebesgue space with respect to the measure $\mu$ with the corresponding norm $\|f\|_{L^r(\Omega,d\mu)}=\left(\int_\Omega |f(x)|^r\,d\mu\right)^{1/r}$.

\bigskip

\textit{Functions with Bounded Variation}. Let $f:\Omega\to\R$ be a function of $L^1(\Omega)$. Define
\begin{multline*}
\int_\Omega|Df|=
\sup\Big\{\int_\Omega f\div\varphi\,dx\, :\\ \varphi=(\varphi_1,\dots,\varphi_N)\in C^1_0(\Omega;\R^N),\ |\varphi(x)|\le 1\ \text{for}\ x\in \Omega\Big\},
\end{multline*}
where $\div\varphi=\sum_{i=1}^N\frac{\partial\varphi_i}{\partial x_i}$. According to the Radon-Nikodym theorem, if $\int_\Omega|Df|<+\infty$ then the distribution $Df$ is a measure and there exist a vector-valued function $\nabla f\in[L^1(\Omega)]^N$ and a measure $D_s f$, singular with respect to the $N$-dimensional Lebesgue measure $\LL^N\lfloor\Omega$ restricted to $\Omega$, such that
$$Df=\nabla f\LL^N\lfloor\Omega+D_s f.$$

\begin{definition}\label{Def 1.4}
A function $f\in L^1(\Omega)$ is said to have a bounded variation in $\Omega$ if $\int_\Omega|Df|<+\infty$. By $BV(\Omega)$ we denote the space of all functions in $L^1(\Omega)$ with bounded variation.
\end{definition}

Under the norm
$$\|f\|_{BV(\Omega)}=\|f\|_{L^1(\Omega)}+\int_\Omega|D f|,$$
$BV(\Omega)$ is a Banach space. It is well-known the following compactness result for $BV$-functions:

\begin{proposition}\label{Prop 1.4}
The uniformly bounded sets in $BV$-norm are relatively compact in $L^1(\Omega)$.
\end{proposition}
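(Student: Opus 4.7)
\bigskip

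\textbf{Proof plan.} The plan is to verify the Fréchet--Kolmogorov precompactness criterion in $L^1(\Omega)$ for an arbitrary sequence $\{f_n\}\subset BV(\Omega)$ with $\|f_n\|_{BV(\Omega)}\le M$. This criterion requires (a) uniform $L^1$ boundedness, which is immediate from $\|f_n\|_{L^1(\Omega)}\le M$, and (b) equicontinuity of translations, i.e., $\sup_n\|f_n(\cdot+h)-f_n\|_{L^1}\to 0$ as $h\to 0$. The key quantitative ingredient is the \emph{translation estimate for $BV$ functions}:
$$\int_{\R^N}\bigl|g(x+h)-g(x)\bigr|\,dx\le |h|\,|Dg|(\R^N)\qquad\forall\,g\in BV(\R^N),\ h\in\R^N.$$
I would first establish this estimate for $g\in C^1_0(\R^N)$ by writing $g(x+h)-g(x)=\int_0^1\nabla g(x+th)\cdot h\,dt$, taking absolute values, integrating in $x$ and using Fubini; then pass to arbitrary $g\in BV(\R^N)$ by the standard mollification argument (approximation $g_\e=g*\eta_\e\to g$ in $L^1$ with $\int|\nabla g_\e|\le|Dg|(\R^N)$).

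Since the estimate above is formulated on $\R^N$, the next step is to move from $BV(\Omega)$ to $BV(\R^N)$. Because $\partial\Omega$ is Lipschitz, there exists a bounded extension operator $E:BV(\Omega)\to BV(\R^N)$ such that each $g_n:=E(f_n)$ satisfies $\|g_n\|_{BV(\R^N)}\le C_\Omega M$ and $\mathrm{supp}\,g_n\subset K$ for a fixed compact set $K\supset\overline\Omega$. Applying the translation estimate to $g_n$ yields
$$\sup_n\int_{\R^N}\bigl|g_n(x+h)-g_n(x)\bigr|\,dx\le C_\Omega M\,|h|,$$
which in particular gives the required equicontinuity of translations restricted to $\Omega$.

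Combining boundedness, equi-integrability (trivial here, since all $g_n$ are supported in the fixed compact $K$ and uniformly bounded in $L^1$) and the translation equicontinuity, the Fréchet--Kolmogorov theorem yields precompactness of $\{g_n\}$ in $L^1(\R^N)$. Restricting a convergent subsequence to $\Omega$ produces a limit in $L^1(\Omega)$, which is the desired conclusion.

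The main obstacle I anticipate is not the translation estimate itself (this is essentially a one-line computation on smooth functions plus mollification) but rather the passage from $\Omega$ to $\R^N$: one must use the Lipschitz regularity of $\partial\Omega$ to secure a $BV$-extension with controlled total variation, otherwise the translation bound only provides local equicontinuity inside $\Omega$ and a separate tightness-type argument would be needed to rule out mass accumulating near $\partial\Omega$ along the sequence.
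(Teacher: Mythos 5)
The paper offers no proof of this proposition: it is stated as a well-known fact, with the standard reference being Giusti's monograph cited nearby. Your argument is the classical proof of exactly that fact — uniform $L^1$ bound plus the translation estimate $\int_{\R^N}|g(\cdot+h)-g|\,dx\le |h|\,|Dg|(\R^N)$ (proved on $C^1_0$ and extended by mollification), combined with a bounded $BV$-extension operator afforded by the Lipschitz regularity of $\partial\Omega$, feeding into the Fr\'echet--Kolmogorov criterion — and it is correct. You also correctly identify the only genuinely delicate point, namely that without the extension step (or some substitute exploiting the regularity of $\partial\Omega$) one would only obtain compactness in $L^1_{loc}(\Omega)$ rather than in $L^1(\Omega)$; since the paper assumes $\Omega$ bounded with Lipschitz boundary, your use of the extension operator is legitimate and closes that gap.
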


\begin{definition}\label{Def 1.5}
A sequence $\{f_k\}_{k=1}^\infty\subset BV(\Omega)$ weakly converges to some $f\in BV(\Omega)$, and we write $f_k\rightharpoonup f$ iff the two following conditions hold: $f_k\to f$ strongly in $L^1(\Omega)$, and $D f_k\rightharpoonup Df$ weakly* in $M(\Omega;\R^N)$.
\end{definition}

In the proposition below we give a compactness result related to this convergence, together with the lower semicontinuity property (see \cite{Giusti}):

\begin{proposition}\label{Prop 1.6}
Let $\{f_k\}_{k=1}^\infty$ be a sequence in $BV(\Omega)$ strongly converging to some $f$ in $L^1(\Omega)$ and satisfying $\sup_{k\in\N}\int_\Omega|Df_k|<+\infty$. Then
\begin{description}
\item[(i)] $f\in BV(\Omega)$ and $\int_\Omega|Df|\le\liminf_{k\to\infty}\int_\Omega|Df_k|$;
\item[(ii)] $f_k\rightharpoonup f$ in $BV(\Omega)$.
\end{description}
\end{proposition}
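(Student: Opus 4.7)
The plan is to derive both assertions from the dual characterization of the total variation together with standard weak$^*$-compactness in the space of Radon measures.

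For (i), I would argue directly from the definition of $\int_\Omega|Df|$. Fix any competitor $\varphi\in C^1_0(\Omega;\R^N)$ with $|\varphi(x)|\le 1$. Since $\div\varphi$ lies in $L^\infty(\Omega)$ with compact support in $\Omega$, the strong convergence $f_k\to f$ in $L^1(\Omega)$ yields
$$\int_\Omega f\,\div\varphi\,dx=\lim_{k\to\infty}\int_\Omega f_k\,\div\varphi\,dx\le\liminf_{k\to\infty}\int_\Omega|Df_k|.$$
Taking the supremum over all admissible $\varphi$ gives simultaneously the membership $f\in BV(\Omega)$ (because the liminf is finite by hypothesis and $f\in L^1(\Omega)$) and the lower-semicontinuity estimate of (i).

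For (ii), in view of Definition~\ref{Def 1.5} it remains to prove that $Df_k\rightharpoonup Df$ weakly$^*$ in $M(\Omega;\R^N)$. The uniform bound $\sup_k|Df_k|(\Omega)<+\infty$, combined with the Riesz identification $M(\Omega;\R^N)\simeq C_0(\Omega;\R^N)'$, allows me to invoke the Banach-Alaoglu theorem: every subsequence of $\{Df_k\}$ admits a further subsequence $\{Df_{k_j}\}$ converging weakly$^*$ to some $\mu\in M(\Omega;\R^N)$. To identify $\mu$ with $Df$, I would test against $\varphi\in C^1_0(\Omega;\R^N)$ and integrate by parts:
$$\int_\Omega\varphi\cdot dDf_{k_j}=-\int_\Omega f_{k_j}\,\div\varphi\,dx\longrightarrow -\int_\Omega f\,\div\varphi\,dx=\int_\Omega\varphi\cdot dDf,$$
where on the left I used weak$^*$ convergence and on the right the strong $L^1$-convergence of $f_{k_j}$. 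Hence $\mu$ and $Df$ coincide on $C^1_0(\Omega;\R^N)$, and the density of that space in $C_0(\Omega;\R^N)$ together with the uniform total-variation bound forces $\mu=Df$ as vector Radon measures. Since every subsequence of $\{Df_k\}$ admits a further subsequence converging weakly$^*$ to the same limit, the Urysohn subsequence principle yields weak$^*$ convergence of the whole sequence, completing (ii).

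The only step that is not automatic is the identification of the limit measure $\mu$: the integration-by-parts argument works cleanly only against $C^1$ test fields, whereas weak$^*$ convergence is formulated against all of $C_0(\Omega;\R^N)$. Upgrading agreement on $C^1_0$ to agreement on $C_0$ relies on the density of the former in the latter (by mollification and a cutoff) together with the equi-boundedness of the dual pairings provided by the uniform total-variation bound. Every other step is a direct application of Banach-Alaoglu, the Riesz representation theorem, and the same elementary estimate already used for (i).
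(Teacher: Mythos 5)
Your proof is correct. The paper does not actually prove Proposition~\ref{Prop 1.6} --- it simply cites Giusti's book --- and your argument (lower semicontinuity via the dual definition of $\int_\Omega|Df|$, then Banach--Alaoglu plus identification of the weak$^*$ limit by integration by parts and a density/Urysohn argument) is precisely the standard proof found in that reference, with the one delicate point, upgrading agreement on $C^1_0$ to agreement on $C_0$, correctly identified and handled.
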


\bigskip

\textit{Convergence in variable spaces.}
Let $\{\mu_k\}_{k\in\N}$, $\mu$ be Radon measures such that $\mu_k\stackrel{*}{\rightharpoonup}\mu$ in $M_+(\Omega)$, i.e.,
\begin{equation}
\label{1.10}
\lim_{k\to\infty}\int_\Omega\varphi\,d\mu_k=
\int_\Omega \varphi\,d\mu\qquad\forall\varphi\in C_0(\R^N),
\end{equation}
where $C_0(\R^N)$ is the space of all compactly supported continuous functions. The typical example of such measures is
$$d\mu_k=\rho_k(x)\,dx,\quad d\mu=\rho(x)\,dx,\text{ where }0\le\rho_k\rightharpoonup\rho\text{ in }L^1(\Omega).$$
Let us recall the definition and main properties of convergence in the variable $L^2$-space (see \cite{Zh_98}).
\begin{enumerate}
\item[1.] A sequence $\big\{v_k\in L^2(\Omega,d\mu_k)\big\}$ is called bounded if $\ds\limsup_{k\to\infty}\int_\Omega|v_k|^2\,d\mu_k<+\infty$.

\item[2.] A bounded sequence $\big\{v_k\in L^2(\Omega,d\mu_k)\big\}$ converges weakly to $v\in L^2(\Omega,d\mu)$ if
$$\lim_{k\to\infty}\int_\Omega v_k\varphi\,d\mu_k=
\int_\Omega v\varphi\,d\mu\qquad\forall\varphi\in C^{\infty}_0(\Omega),$$
and it is written as $v_k\rightharpoonup v$ in $L^2(\Omega,d\mu_k)$.

\item[3.] The strong convergence $v_k\to v$ in $L^2(\Omega,d\mu_k)$ means that $v\in L^2(\Omega,d\mu)$ and
\begin{equation}\label{1.10_3}
\lim_{k\to\infty}\int_\Omega v_k z_k\,d\mu_k=
\int_\Omega vz\,d\mu\qquad\text{as }z_k\rightharpoonup z\text{ in }L^2(\Omega,d\mu_k).
\end{equation}
\end{enumerate}
The following convergence properties in variable spaces hold:
\begin{enumerate}
\item[(a)] \textsl{Compactness:} if a sequence is bounded in $L^2(\Omega,d\mu_k)$, then this sequence is compact in the sense of the weak convergence;
\item[(b)] \textsl{Lower semicontinuity:} if $v_k\rightharpoonup v$ in $L^2(\Omega,d\mu_k)$, then
\begin{equation}\label{1.10_0}
\liminf_{\e\to 0}\int_\Omega|v_k|^2\,d\mu_k\ge\int_\Omega v^2\,d\mu;
\end{equation}
\item[(c)] \textsl{Strong convergence:} $v_k\to v$ if and only if $v_k\rightharpoonup v$ in $L^2(\Omega,d\mu_k)$ and
\begin{equation}\label{1.10_1}
\lim_{k\to\infty}\int_{\Omega}|v_k|^2\,d\mu_k=\int_{\Omega}v^2\,d\mu.
\end{equation}
\end{enumerate}

%%%%%%%%%%%%%%%%%%%%%%%%%%%%%%%%%%%%%%%%%%%%%%%%%%%%%%%%%%%%%%%%%
%%%%%%%%%%%%%%%%%%%%%%%%%%%%%%%%%%%%%%%%%%%%%%%%%%%%%%%%%%%%%%%%%

\section{Setting of the Optimal Control Problem}\label{Sec 2}

Let $m\in\R_+$ be some positive value, and let $\xi_1$, $\xi_2$ be given elements of $L^1(\Omega)$ satisfying the conditions
\begin{equation}\label{2.0b}
\xi_1(x)\le\xi_2(x)\text{ a.e. in }\Omega,\quad \xi_1^{-1}\in L^1(\Omega).
\end{equation}
To introduce the class of admissible $BV$-controls we adopt the following concept:

\begin{definition}\label{Def 2.1}
We say that a nonnegative weight $\rho$ is an admissible control to the boundary value problem
\begin{gather}\label{2.2}
-\div\big(\rho(x)\nabla y\big)+y = f\quad\text{in }\ \Omega,\\
\label{2.3}
y=0\text{ on }\Gamma_D,\qquad\rho(x)\frac{\partial y}{\partial\nu}=0\ \text{on }\Gamma_N,
\end{gather}
(it is written as $\rho\in\RR_{ad}$) if
\begin{gather}\label{2.4}
\rho\in BV(\Omega),\quad\int_\Omega\rho\,dx=m,\quad\xi_1(x)\le\rho(x)\le\xi_2(x)\text{ a.e. in }\Omega.
\end{gather}
Here $f\in L^2(\Omega)$ is a given function.
\end{definition}

Hereinafter we assume that the set $\RR_{ad}$ is nonempty.

\begin{remark}\label{Rem 2.5}
In view of the property \eqref{2.0b}, we have the boundary value problem for the degenerate elliptic equation. It means that for some admissible controls $\rho\in\RR_{ad}$ the boundary value problem \eqref{2.2}--\eqref{2.3} can exhibit the Lavrentieff phenomenon, the nonuniqueness of the weak solutions as well as other surprising consequences.
\end{remark}

The optimal control problem we consider in this paper is to minimize the discrepancy between a given distribution $y_d\in L^2(\Omega)$ and the solution of boundary valued problem \eqref{2.2}--\eqref{2.3} by choosing an appropriate weight function $\rho\in\RR_{ad}$. More precisely, we are concerned with the following optimal control problem
\begin{equation}\label{2.6}
\text{Minimize }\left\{I(\rho,y)=\int_\Omega|y(x)-y_d(x)|^2\,dx
+\int_\Omega|\nabla y(x)|_{\R^N}^2\rho\,dx
+\int_\Omega|D\rho|\right\}
\end{equation}
subject to the constraints \eqref{2.2}--\eqref{2.4}.

\begin{definition}
\label{Def 2.7}
We say that a function $y=y(\rho,f)\in W_\rho$ is a weak solution to the boundary value problem \eqref{2.2}--\eqref{2.3} for a fixed control $\rho\in\RR_{ad}$ if the integral identity
\begin{equation}\label{2.8}
\int_\Omega \big(\rho\nabla y\cdot\nabla\varphi+y\varphi\big)\,dx
=\int_\Omega f\varphi\,dx
\end{equation}
holds for any $\varphi\in C^\infty_0(\Omega;\Gamma_D)$.
\end{definition}

It is clear that the question of uniqueness of a weak solution leads us to the problem of density of the subspace of smooth functions $C^\infty_0(\Omega;\Gamma_D)$ in $W_\rho$. However, as was indicated in \cite{ZhPa1}, for a ``typical'' weight function $\rho$ the subspace $C^\infty_0(\Omega;\Gamma_D)$ is not dense in $W_\rho$, and hence there is no uniqueness of weak solutions (for more details and other types of solutions we refer to \cite{Boccardo,KovGorb_07,Zh_98,ZhPa1}). Thus the mapping $\rho\mapsto y(\rho,f)$ is multivalued, in general. Taking this fact into account, we introduce the set
\begin{gather}\label{2.12}
\Xi_W=\left\{(\rho,y)\ \left|\ \rho\in\RR_{ad},\ y\in W_\rho,\ (\rho,y)\text{ are related by \eqref{2.8}}\right.\right\}.
\end{gather}
Note that the set $\Xi_W$ is always nonempty. Indeed, let $V_\rho$ be some intermediate space with $H_\rho\subset V_\rho\subset W_\rho$. We say that a function $y=y(\rho,f)\in V_\rho$ is a $V_\rho$-solution or variational solution to the boundary value problem \eqref{2.2}--\eqref{2.3} if the integral identity \eqref{2.8} holds for every test function $\varphi\in V_\rho$. Hence, in this case the energy equality
\begin{equation}\label{2.12a}
\int_\Omega\left(|\nabla y|^2\rho + y^2\right)\,dx=\int_\Omega fy\,dx
\end{equation}
must be valid. Since
$$\Big|\int_\Omega fy\,dx\Big|\le
\Big(\int_\Omega f^2\,dx\Big)^{1/2}\Big(\int_\Omega y^2\,dx\Big)^{1/2}
\le C\|y\|_\rho$$
for every fixed $f\in L^2(\Omega)$, it follows that the existence and uniqueness of a $V_\rho$-solution are the direct consequence of the Riesz Representation Theorem. Thus every variational solution is also a weak solution to the problem \eqref{2.2}--\eqref{2.3}. Hence $\Xi_W\ne\emptyset$ and therefore the corresponding minimization problem
\begin{equation}\label{2.13}
\inf_{(\rho,y)\in\Xi_W} I(\rho,y)
\end{equation}
is regular. In view of this, we adopt the following concept.

\begin{definition}\label{Def 2.14}
We say that a pair $(\rho^0,y^0)\in L^1(\Omega)\times W^{1,1}(\Omega;\Gamma_D)$ is a weak optimal solution to the problem \eqref{2.4}--\eqref{2.6} if $(\rho^0,y^0)$ is a minimizer for \eqref{2.13}.
\end{definition}

%%%%%%%%%%%%%%%%%%%%%%%%%%%%%%%%%%%%%%%%%%%%%%%%%%%%%%%%%%%%%%%%%
%%%%%%%%%%%%%%%%%%%%%%%%%%%%%%%%%%%%%%%%%%%%%%%%%%%%%%%%%%%%%%%%%

\section{Existence of Weak Optimal Solutions}\label{Sec 3}

Our prime interest in this section deals with the solvability of optimal control problem \eqref{2.4}--\eqref{2.6} in the class of weak solutions. To begin with, we make use of the following results. Let $\left\{(\rho_k,y_k)\in\Xi_W \right\}_{k\in\N}$ be any sequence of weak admissible solutions.

\begin{lemma}\label{Lemma 3.0}
Let $\{\rho_k\}_{k\in\N}$ be a sequence in $\RR_{ad}$ such that $\rho_k\to\rho$ in $L^1(\Omega)$. Then
$$(\rho_k)^{-1}\to\rho^{-1}\ \text{ in the variable space }L^2(\Omega,\rho_k dx).$$
\end{lemma}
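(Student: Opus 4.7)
The plan is to apply the characterization of strong convergence in variable $L^2$-spaces recalled in Section 2: namely property (c), which says that $(\rho_k)^{-1}\to\rho^{-1}$ in $L^2(\Omega,\rho_k\,dx)$ if and only if $(\rho_k)^{-1}\rightharpoonup\rho^{-1}$ weakly in $L^2(\Omega,\rho_k\,dx)$ together with the norm convergence $\int_\Omega(\rho_k)^{-2}\rho_k\,dx\to\int_\Omega\rho^{-2}\rho\,dx$. The key observation is that both of these quantities simplify dramatically because the weight cancels: the integrand $(\rho_k)^{-1}\varphi\,\rho_k$ reduces to $\varphi$, and $(\rho_k)^{-2}\rho_k$ reduces to $(\rho_k)^{-1}$.

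First I would establish the $L^2(\Omega,\rho_k\,dx)$-boundedness of $\{(\rho_k)^{-1}\}$. Since $\rho_k\in\RR_{ad}$, the admissibility constraint $\xi_1(x)\le\rho_k(x)$ gives $(\rho_k)^{-1}\le\xi_1^{-1}$ a.e., and therefore
\begin{equation*}
\int_\Omega\big|(\rho_k)^{-1}\big|^2\rho_k\,dx=\int_\Omega(\rho_k)^{-1}\,dx\le\int_\Omega\xi_1^{-1}\,dx<+\infty,
\end{equation*}
uniformly in $k$, since $\xi_1^{-1}\in L^1(\Omega)$ by \eqref{2.0b}.

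Second, the weak convergence $(\rho_k)^{-1}\rightharpoonup\rho^{-1}$ in $L^2(\Omega,\rho_k\,dx)$ is immediate: for every test function $\varphi\in C^\infty_0(\Omega)$ one has
\begin{equation*}
\int_\Omega(\rho_k)^{-1}\varphi\,\rho_k\,dx=\int_\Omega\varphi\,dx=\int_\Omega\rho^{-1}\varphi\,\rho\,dx,
\end{equation*}
so the defining limit holds trivially (with equality at every step).

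Third, the norm-convergence criterion \eqref{1.10_1} reduces here to showing $\int_\Omega(\rho_k)^{-1}\,dx\to\int_\Omega\rho^{-1}\,dx$. Since $\rho_k\to\rho$ in $L^1(\Omega)$, every subsequence admits a further subsequence converging a.e.\ in $\Omega$; along such a sub-subsequence $(\rho_k)^{-1}\to\rho^{-1}$ a.e., and the uniform domination $(\rho_k)^{-1}\le\xi_1^{-1}\in L^1(\Omega)$ lets me apply Lebesgue's dominated convergence theorem to obtain $(\rho_k)^{-1}\to\rho^{-1}$ in $L^1(\Omega)$ along that sub-subsequence. By the standard subsequence principle this forces convergence of the full sequence, in particular the convergence of integrals needed for \eqref{1.10_1}. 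Combining with the weak convergence from the previous paragraph and applying property (c) yields the strong convergence $(\rho_k)^{-1}\to\rho^{-1}$ in $L^2(\Omega,\rho_k\,dx)$.

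The only conceptually delicate point is the justification that $L^1$-convergence of $\rho_k$ transfers to $L^1$-convergence of the reciprocals; this requires the uniform domination by $\xi_1^{-1}$, so it is really the lower-bound half of \eqref{2.0b} that is doing the work. Everything else is a direct cancellation in the bilinear expression that defines the variable-space pairing.
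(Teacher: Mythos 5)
Your proof is correct and takes essentially the same route as the paper's: both verify weak convergence via the cancellation $\int_\Omega(\rho_k)^{-1}\varphi\,\rho_k\,dx=\int_\Omega\varphi\,dx$, obtain $(\rho_k)^{-1}\to\rho^{-1}$ in $L^1(\Omega)$ from the bound $(\rho_k)^{-1}\le\xi_1^{-1}$ together with a.e.\ convergence along subsequences, and conclude by the strong-convergence criterion \eqref{1.10_1} in the variable space. Your explicit subsequence-of-subsequence argument is a slightly more careful version of the paper's ``up to a subsequence'' step, but the substance is identical.
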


\begin{proof}
To prove this result we use some ideas of \cite{ZhPa1}. By the properties of the set of admissible controls $\RR_{ad}$, we have $\rho_k^{-1}\le\xi_1^{-1}$ for every $k\in\N$, hence the sequence $\left\{\rho_k^{-1}\right\}_{k\in\N}$ is equi-integrable on $\Omega$. Note that, up to a subsequence, we have $\rho_k\to\rho$ a.e. in $\Omega$. Since $\xi_2^{-1}\le\rho_k^{-1}\le\xi_1^{-1}$, Lebesgue Theorem implies
\begin{equation}\label{3.00}
\rho_k^{-1}\to\rho^{-1}\text{ in }L^1(\Omega).
\end{equation}
Let $\varphi\in C^\infty_0(\Omega)$ be a fixed function. Then the equality
$$\int_\Omega\rho_k^{-1}\varphi\,\rho_k\,dx=
\int_\Omega\varphi\,dx=
\int_\Omega\rho^{-1}\varphi\,\rho\,dx\quad\forall k\in\N$$
leads us to the weak convergence $\rho^{-1}_k\rightharpoonup\rho^{-1}$ in $L^2(\Omega,\rho_k dx)$. Taking into account the strong convergence $\rho_k^{-1}\to\rho^{-1}$ in $L^1(\Omega)$ and the fact that $\Omega$ is a bounded domain, we get
$$\lim_{k\to\infty}\int_\Omega|\rho_k|^{-2}\rho_k\,dx
=\lim_{k\to\infty}\int_\Omega\rho_k^{-1}\,dx
=\int_\Omega\rho^{-1}\,dx
=\int_\Omega|\rho|^{-2}\rho\,dx.$$
Hence, by the strong convergence criterium in the variable space $L^2(\Omega,\rho_k dx)$, we come to the required conclusion.
\end{proof}

Our next step deals with the study of topological properties of the set of weak admissible solutions $\Xi_W$ to the problem \eqref{2.2}--\eqref{2.6}.

\begin{definition}\label{Def 3.26}
A sequence $\{(\rho_k,y_k)\in\Xi_W\}_{k\in\N}$ is called bounded if
$$\sup_{k\in\N}\left[\|\rho_k\|_{BV(\Omega)}
+\|y_k\|_{L^2(\Omega)}
+\|\nabla y_k\|_{L^2(\Omega,\rho_k dx)^N}\right]
<+\infty.$$
\end{definition}

\begin{definition}\label{Def 3.26a}
We say that a bounded sequence $\{(\rho_k,y_k)\in\Xi_W\}_{k\in\N}$ of the weak admissible solutions $\tau$-converges to a pair $(\rho,y)\in BV(\Omega)\times W^{1,1}(\Omega)$ if
\begin{enumerate}
\item[(a)] $\rho_k\rightharpoonup\rho$ in $BV(\Omega)$;
\item[(d)] $y_k\rightharpoonup y$ weakly in $L^2(\Omega)$;
\item[(e)] $\nabla y_k\rightharpoonup\nabla y\ni L^2(\Omega,\rho\,dx)^N$ in the variable space $L^2(\Omega,\rho_k dx)^N$.
\end{enumerate}
\end{definition}

Note that due to assumptions \eqref{2.0b}, \eqref{2.4}, and estimates like \eqref{1.2.0}--\eqref{1.2.00}, the inclusion $y\in W^{1,1}(\Omega)$ is obvious.

\begin{lemma}\label{Lemma 3.1}
Let $\{(\rho_k,y_k)\in\Xi_W\}_{k\in\N}$ be a bounded sequence. Then there is a pair $(\rho,y)\in BV(\Omega)\times W^{1,1}(\Omega)$ such that, up to a subsequence, $(\rho_k,y_k)\,\stackrel{\tau}{\longrightarrow}(\rho,y)$ and $y\in W_\rho$.
\end{lemma}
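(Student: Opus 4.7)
My plan is to extract limits one compactness at a time---$BV$ for $\rho_k$, weak $L^2$ for $y_k$, variable $L^2$ for $\nabla y_k$---and then identify the variable-space weak limit of $\nabla y_k$ with the distributional gradient of $y$ via Lemma~\ref{Lemma 3.0}.

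First I would apply Propositions~\ref{Prop 1.4} and~\ref{Prop 1.6} to the $BV$-bounded sequence $\{\rho_k\}$: along a subsequence I obtain $\rho_k \to \rho$ strongly in $L^1(\Omega)$ and a.e., with $\rho \in BV(\Omega)$ and $\rho_k \rightharpoonup \rho$ in $BV(\Omega)$. Passing the pointwise bounds $\xi_1 \le \rho_k \le \xi_2$ and the mass constraint $\int_\Omega \rho_k\,dx = m$ to the limit shows $\rho \in \RR_{ad}$. Separately, the $L^2(\Omega)$-bound on $\{y_k\}$ yields a further subsequence with $y_k \rightharpoonup y$ weakly in $L^2(\Omega)$, and the boundedness of $\nabla y_k$ in $L^2(\Omega, \rho_k dx)^N$ together with the compactness property~(a) for variable $L^2$-spaces produces $\nabla y_k \rightharpoonup v$ in the variable sense, for some $v \in L^2(\Omega, \rho\,dx)^N$.

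The crux is to show $v = \nabla y$. I would exploit Lemma~\ref{Lemma 3.0}: for any $\varphi \in C^\infty_0(\Omega)$, the sequence $\rho_k^{-1}\varphi$ converges \emph{strongly} to $\rho^{-1}\varphi$ in the variable space $L^2(\Omega, \rho_k dx)$ (weak convergence is immediate from $\int_\Omega \rho_k^{-1}\varphi\,\psi\,\rho_k\,dx = \int_\Omega \varphi\psi\,dx$, and convergence of norms follows from the boundedness of $\varphi$ combined with $\rho_k^{-1}\to\rho^{-1}$ in $L^1(\Omega)$, which was established in the proof of Lemma~\ref{Lemma 3.0}). Pairing this strongly convergent multiplier with the weak variable convergence of $\nabla y_k$ via \eqref{1.10_3} yields componentwise
\[
\int_\Omega (\nabla y_k)_i\, \varphi\, dx = \int_\Omega (\nabla y_k)_i\,(\rho_k^{-1}\varphi)\, \rho_k\, dx \to \int_\Omega v_i\,\varphi\, dx.
\]
On the other hand, integration by parts together with $y_k \rightharpoonup y$ in $L^2(\Omega)$ gives $\int_\Omega (\nabla y_k)_i \varphi\,dx = -\int_\Omega y_k\, \partial_i\varphi\,dx \to -\int_\Omega y\, \partial_i\varphi\,dx$. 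Comparing the two limits identifies $\nabla y$ with $v$ in the sense of distributions, hence in $L^2(\Omega,\rho\,dx)^N$.

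Finally, $y \in W_\rho$ follows almost for free: since $\nabla y = v \in L^2(\Omega, \rho\,dx)^N$ and $y \in L^2(\Omega)$, estimates \eqref{1.2.0}--\eqref{1.2.00} place $y$ in $W^{1,1}(\Omega)$; moreover $\nabla y_k$ is equi-integrable by the same Cauchy--Schwarz estimate combined with $\rho_k^{-1}\le\xi_1^{-1}\in L^1(\Omega)$, so $y_k \rightharpoonup y$ weakly in $W^{1,1}(\Omega)$, and closedness of the subspace $W^{1,1}(\Omega;\Gamma_D)$ forces $y \in W^{1,1}(\Omega;\Gamma_D)$. The step I expect to be the main obstacle is the identification $v = \nabla y$: the variable-space weak limit is characterized against $\rho_k$-weighted tests while the distributional gradient is characterized against unweighted tests, and the bridge between them is precisely what Lemma~\ref{Lemma 3.0} provides through the strong convergence $\rho_k^{-1}\to\rho^{-1}$ in the variable $L^2(\Omega,\rho_k dx)$ space.
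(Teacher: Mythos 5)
Your proposal is correct and follows essentially the same route as the paper: extract limits via $BV$-compactness, weak $L^2$-compactness, and variable-space compactness, then identify $v=\nabla y$ by using the strong convergence $\rho_k^{-1}\to\rho^{-1}$ in $L^2(\Omega,\rho_k\,dx)$ from Lemma~\ref{Lemma 3.0} to convert the weighted weak limit of $\nabla y_k$ into an unweighted (distributional) one. Your treatment is in fact slightly more careful than the paper's at two points the paper leaves implicit --- the explicit integration by parts identifying $v$ with $Dy$, and the verification that $y$ retains the $\Gamma_D$ trace condition via weak $W^{1,1}$-closedness --- but these are refinements of the same argument, not a different one.
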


\begin{proof}
By Proposition \ref{Prop 1.4} and the compactness criterium of the weak convergence in variable spaces, there exist a subsequence of $\{(\rho_k,y_k)\in\Xi_W\}_{k\in\N}$, still denoted by the same indices, and functions $\rho\in BV(\Omega)$, $y\in L^2(\Omega)$, and $v\in L^2(\Omega,\rho\,dx)^N$ such that
\begin{gather}\label{3.2}
\rho_k\to\rho\text{ in }L^1(\Omega),\\
\label{3.3}
y_k\rightharpoonup y\text{ in }L^2(\Omega),\quad\nabla y_k\rightharpoonup v\text{ in the variable space }L^2(\Omega,\rho_k\,dx).
\end{gather}
Let us show that $y\in W^{1,1}(\Omega)$, and $v=\nabla y$. Since $\xi_1\le\rho_k\le\xi_2$ for every $k\in\N$, \eqref{3.2} and Lemma \ref{Lemma 3.0} imply the property (see \eqref{3.00})
\begin{equation}\label{3.4}
\rho_k^{-1}\to\rho^{-1}\text{ in }L^1(\Omega),\qquad\xi_1\le\rho\le\xi_2\text{ a.e. in }\Omega.
\end{equation}
This yields that the sequence $\{\nabla y_k\}_{k\in\N}$ is weakly compact in $L^1(\Omega)^N$. Indeed, the property of its equi-integrability immediately follows from the inequality
$$\int_E|\nabla y_k|\,dx\le
\Big(\int_E\rho_k^{-1}\,dx\Big)^{1/2}\Big(\int_\Omega|\nabla y_k|^2\rho_k\,dx\Big)^{1/2}
\le C\Big(\int_E\rho_k^{-1}\,dx\Big)^{1/2}.$$
As a result, using the strong convergence $\left(\rho_k\right)^{-1}\to\rho^{-1}$ in the variable space $L^2(\Omega,\rho_k dx)$ (see Lemma \ref{Lemma 3.0}) and its properties, we obtain
\begin{equation*}
\begin{array}{ll}
\ds\lim_{k\to\infty}\int_\Omega\nabla y_k\cdot\psi\,dx=
\lim_{k\to\infty}\int_\Omega \rho_k^{-1}\nabla y_k\cdot\psi\rho_k\,dx\\
\hskip2.9truecm=\ds\int_\Omega\rho^{-1}v\cdot\psi\rho\,dx=\int_\Omega v\cdot\psi\,dx
\end{array}
\end{equation*}
for all $\psi\in C_0^\infty(\Omega)^N$. Thus $\nabla y_k\rightharpoonup v$ in $L^1(\Omega)^N$. This implies that $y\in W^{1,1}(\Omega)$ and $\nabla y=v$. The inclusion $y\in W_\rho$ immediately follows from \eqref{3.2}--\eqref{3.3}.
\end{proof}

\begin{theorem}\label{Th 3.5}
For every $f\in L^2_{loc}(\R^N)$ the set $\Xi_W$ is sequentially closed with respect to the $\tau$-convergence.
\end{theorem}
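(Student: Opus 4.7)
The plan is to verify that the limit pair $(\rho,y)$ produced by the $\tau$-convergence satisfies all three conditions defining $\Xi_W$: admissibility of $\rho$, membership $y\in W_\rho$, and the weak-formulation identity \eqref{2.8}. Steps one and two are essentially packaged in what has already been proved. Indeed, $\rho_k\rightharpoonup\rho$ in $BV(\Omega)$ gives $\rho\in BV(\Omega)$ and strong convergence $\rho_k\to\rho$ in $L^1(\Omega)$ by Definition \ref{Def 1.5}; extracting an a.e.-convergent subsequence and passing the bound $\xi_1\le\rho_k\le\xi_2$ to the limit yields the same envelope for $\rho$, while the mass condition $\int_\Omega\rho\,dx=m$ follows from the $L^1$-convergence. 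Hence $\rho\in\RR_{ad}$. The inclusion $y\in W_\rho$ is already furnished by Lemma \ref{Lemma 3.1}.

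The substantive work is to pass to the limit in
$$\int_\Omega\rho_k\nabla y_k\cdot\nabla\varphi\,dx+\int_\Omega y_k\varphi\,dx=\int_\Omega f\varphi\,dx,\qquad\varphi\in C^\infty_0(\Omega;\Gamma_D).$$
The linear term converges straightforwardly by $y_k\rightharpoonup y$ in $L^2(\Omega)$. For the principal term the natural tool is the variable-space weak convergence $\nabla y_k\rightharpoonup\nabla y$ in $L^2(\Omega,\rho_k\,dx)^N$ from Definition \ref{Def 3.26a}(e), applied to the test field $\nabla\varphi$.

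The main obstacle is a mismatch of test-function classes: the admissible $\varphi$ only vanish on $\Gamma_D$, so $\nabla\varphi$ is bounded but need not be compactly supported in $\Omega$, while the variable-space weak convergence is defined for test functions in $C^\infty_0(\Omega)$. I would bridge this with a cutoff--diagonal argument. Choose $\zeta_\delta\in C^\infty_0(\Omega)$ with $0\le\zeta_\delta\le 1$ and $\zeta_\delta\equiv 1$ on $\Omega_\delta:=\{x\in\Omega:\mathrm{dist}(x,\partial\Omega)>\delta\}$, and split
$$\int_\Omega\rho_k\nabla y_k\cdot\nabla\varphi\,dx=\int_\Omega\nabla y_k\cdot(\zeta_\delta\nabla\varphi)\,\rho_k\,dx+\int_\Omega\rho_k\nabla y_k\cdot\bigl((1-\zeta_\delta)\nabla\varphi\bigr)\,dx.$$
The first piece falls under Definition \ref{Def 3.26a}(e) since $\zeta_\delta\nabla\varphi\in C^\infty_0(\Omega)^N$, and converges to $\int_\Omega\nabla y\cdot(\zeta_\delta\nabla\varphi)\rho\,dx$. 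The remainder is controlled via Cauchy--Schwarz by
$$\|\nabla\varphi\|_{L^\infty}\Bigl(\int_{\Omega\setminus\Omega_\delta}\rho_k\,dx\Bigr)^{1/2}\Bigl(\int_\Omega|\nabla y_k|^2\rho_k\,dx\Bigr)^{1/2},$$
where the second factor is uniformly bounded by the boundedness hypothesis on the sequence in $\Xi_W$, and the first factor is small uniformly in $k$ as $\delta\to 0$ because $\rho_k\le\xi_2\in L^1(\Omega)$, so $\int_{\Omega\setminus\Omega_\delta}\rho_k\,dx\le\int_{\Omega\setminus\Omega_\delta}\xi_2\,dx\to 0$ by absolute continuity of the integral. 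An analogous estimate, using $\rho\in L^1(\Omega)$ and $\rho|\nabla y|^2\in L^1(\Omega)$, bounds the corresponding tail $\int_\Omega\rho\nabla y\cdot(1-\zeta_\delta)\nabla\varphi\,dx$. Taking $\limsup_{k\to\infty}$ for fixed $\delta$ and then $\delta\to 0$ produces
$$\lim_{k\to\infty}\int_\Omega\rho_k\nabla y_k\cdot\nabla\varphi\,dx=\int_\Omega\rho\nabla y\cdot\nabla\varphi\,dx,$$
which, combined with the convergence of the linear term, delivers \eqref{2.8} for $(\rho,y)$ and hence $(\rho,y)\in\Xi_W$.
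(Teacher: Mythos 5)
Your proof is correct, but the way you handle the key difficulty --- that test functions $\varphi\in C^\infty_0(\Omega;\Gamma_D)$ need not have $\nabla\varphi$ compactly supported in $\Omega$, while the variable-space weak convergence of Definition \ref{Def 3.26a}(e) only tests against $C^\infty_0(\Omega)$ fields --- is genuinely different from the paper's. The paper extends each pair $(\rho_k,y_k)$ to functions $(\widehat\rho_k,\widehat y_k)$ on all of $\R^N$ with uniform bounds on every bounded domain $Q\supset\overline\Omega$, rewrites the identity \eqref{2.8} as an integral over $\R^N$ against $\chi_\Omega$, and passes to the limit using the strong convergence $\chi_\Omega\to\chi_\Omega$ in the variable space $L^2(Q,\widehat\rho_k\,dx)$ together with the product rule \eqref{1.10_3} for strong-times-weak convergence; the point is that $\nabla\varphi$ for $\varphi\in C^\infty_0(\R^N;\Gamma_D)$ \emph{is} compactly supported in the larger domain $Q$, so the mismatch disappears. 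Your cutoff--diagonal argument instead stays inside $\Omega$: the interior piece $\zeta_\delta\nabla\varphi$ is a legitimate $C^\infty_0(\Omega)^N$ test field, and the boundary layer is killed uniformly in $k$ by Cauchy--Schwarz together with the domination $\rho_k\le\xi_2\in L^1(\Omega)$ and the uniform energy bound $\sup_k\int_\Omega|\nabla y_k|^2\rho_k\,dx<+\infty$. What your route buys is that it avoids having to construct extensions of $BV$-controls and of weighted Sobolev functions with uniformly controlled norms (a step the paper asserts without detailed justification) and avoids the identification $\widehat y_0=y_0$, $\widehat\rho_0=\rho_0$ a.e.\ in $\Omega$; what it costs is the explicit reliance on the uniform $L^1$-majorant $\xi_2$ for the tail estimate, which is however guaranteed by the definition \eqref{2.4} of $\RR_{ad}$. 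The remaining steps (admissibility of the limit control via a.e.\ convergence of a subsequence and passage to the limit in the mass constraint, convergence of the zero-order term by weak $L^2$ convergence, and $y\in W_\rho$ via Lemma \ref{Lemma 3.1}) coincide with the paper's. One small point worth making explicit in a final write-up: after letting $k\to\infty$ at fixed $\delta$, the limit $\int_\Omega\nabla y\cdot(\zeta_\delta\nabla\varphi)\,\rho\,dx$ tends to $\int_\Omega\rho\,\nabla y\cdot\nabla\varphi\,dx$ as $\delta\to0$ by dominated convergence, since $\rho|\nabla y|\,|\nabla\varphi|\in L^1(\Omega)$ by Cauchy--Schwarz applied to $\rho^{1/2}$ and $\rho^{1/2}|\nabla y|$.
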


\begin{proof}
Let $\{(\rho_k,y_k)\}_{k\in\N}\subset\Xi_W$ be a bounded $\tau$-convergent sequence of weak admissible pairs to the optimal control problem \eqref{2.2}--\eqref{2.6}. Let $(\rho_0, y_0)$ be its $\tau$-limit. Our aim is to prove that $(\tau_0,y_0)\in\Xi_W$. By Lemma \ref{Lemma 3.1} we have
\begin{equation}\label{3.6}
\rho_k\to\rho_0\text{ in }L^1(\Omega),\quad\rho_0\in BV(\Omega),\quad\xi_1\le\rho_0\le\xi_2\text{ a.e. in }\Omega.
\end{equation}
Then passing to the limit as $k\to\infty$ in the relation $\int_\Omega \rho_k\,dx=m$, we conclude that $\rho_0\in\RR_{ad}$, i.e. the limit weight function $\rho_0$ is an admissible control.

It remains to show that the pair $(\rho_0,y_0)$ is related by the integral identity \eqref{2.8} for all $\varphi\in C^\infty_0(\Omega;\Gamma_D)$.
For every fixed $k\in\N$ we denote by $(\widehat{\rho}_k,\widehat{y}_k)\in BV_{loc}(\R^N)\times W^{1,1}_{loc}(\R^N)$ an extension of the functions $(\rho_k,y_k)\in\Xi_W$ to the whole of space $\R^N$ such that the sequence $\{(\widehat{\rho}_k,\widehat{y}_k)\}_{k\in\N}$ satisfies the properties:
\begin{gather}
\widehat{\rho}_k\in BV(Q),\quad\xi_1\le\widehat{\rho}_k\le\xi_2\text{ a.e. in }Q,\\
\sup_{k\in\N}\left[\|\widehat{\rho}_k\|_{BV(Q)}
+\|\widehat{y}_k\|_{L^2(Q)}
+\|\nabla\widehat{y}_k\|_{L^2(Q,\widehat{\rho}_kdx)^N}\right]<+\infty
\end{gather}
for any bounded domain $Q$ in $\R^N$. Hence, as done in Lemma \ref{Lemma 3.1} it can be proved that for every bounded domain $Q\subset\R^N$ there exist functions $\widehat{\rho}_0\in BV(Q)$ and $\widehat{y}_0\in W_{\widehat{\rho}_0}$ such that
\begin{gather}\label{3.7}
\widehat{\rho}_k\to\widehat{\rho}_0\text{ in }L^1(Q),\qquad\widehat{y}_k\rightharpoonup\widehat{y}_0\text{ in }L^2(Q),\\
\label{3.8}
\nabla\widehat{y}_k\rightharpoonup\nabla\widehat{y}_0\ni L^2(\Omega,\widehat{\rho}_0\,dx)^N\text{ in the variable space }L^2(\Omega,\widehat{\rho}_k dx)^N.
\end{gather}
It is important to note that in this case we have
\begin{equation}
\label{3.8a}
\widehat{y}_0=y_0\ \text{ and }\ \widehat{\rho}_0=\rho_0\quad\text{a.e. in }\Omega.
\end{equation}
In what follows, we rewrite the integral identity \eqref{2.8} in the equivalent form
\begin{equation}\label{3.9}
\int_{\R^N}\left(\nabla\widehat{y}_k\cdot\nabla\varphi\widehat{\rho}_k
+\widehat{y}_k\varphi\right)\chi_\Omega(x)\,dx
=\int_{\R^N}f\varphi\chi_\Omega(x)\,dx
\qquad\forall\varphi\in C^\infty_0(\R^N,\Gamma_D),
\end{equation}
and pass to the limit in \eqref{3.9} as $k\to\infty$. Using the properties \eqref{3.7}--\eqref{3.8}, and the fact that $\chi_\Omega\to\chi_\Omega$ strongly in the variable space $L^2(Q,\widehat{\rho}_k\,dx)$, i.e.
$$\int_{\R^N}\chi^2_\Omega\widehat{\rho}_k\,dx
=\int_{\R^N}\chi_\Omega\widehat{\rho}_k\,dx
\longrightarrow\int_{\R^N}\chi_\Omega\widehat{\rho}_0\,dx
=\int_{\R^N}\chi^2_\Omega\widehat{\rho}_0\,dx$$
we obtain
$$\int_{\R^N}\left(\nabla\widehat{y}_0\cdot\nabla\varphi\widehat{\rho}_0
+\widehat{y}_0\varphi\right)\chi_\Omega(x)\,dx
=\int_{\R^N}f\varphi\chi_\Omega(x)\,dx\qquad\forall\varphi\in C^\infty_0(\R^N,\Gamma_D)$$
which is equivalent to
$$\int_{\Omega}\left(\nabla\widehat{y}_0\cdot\nabla\varphi\widehat{\rho}_0
+\widehat{y}_0\varphi\right)\,dx
=\int_{\Omega}f\varphi\,dx\qquad\forall\varphi\in C^\infty_0(\Omega,\Gamma_D).$$
As a result, taking into account \eqref{3.8a} and the fact that $\widehat{y}_0\in W_{\widehat{\rho}_0}$ (by Lemma \ref{Lemma 3.1}), we conclude: $y_0$ is a weak solution to the boundary valued problem \eqref{2.2}--\eqref{2.3}  under $\rho=\rho_0$. Thus the $\tau$-limit pair $(\tau_0,y_0)$ belongs to set $\Xi_W$, and this concludes the proof.
\end{proof}

We are now in a position to state the existence of weak optimal pairs to the problem \eqref{2.2}--\eqref{2.6}.

\begin{theorem}\label{Th 3.44}
Let $\xi_1\in L^1_{loc}(\R^N)$ and $\xi_2\in L^1_{loc}(\R^N)$ be such that $\xi_1\le\xi_2$ a.e.in $\R^N$ and $\xi_1^{-1}\in L^1_{loc}(\R^N)$. Let $f\in L^2_{loc}(\R^N)$ and $y_d\in L^2(\Omega)$ be given functions, and assume that $\RR_{ad}\ne\emptyset$. Then the optimal control problem \eqref{2.2}--\eqref{2.6} admits at least one weak solution
$$(\rho^{opt},y^{opt})\in\Xi_W\subset L^1(\Omega)\times W^{1,1}(\Omega,\Gamma_D),
\qquad y^{opt}\in W_{\rho^{opt}}.$$
\end{theorem}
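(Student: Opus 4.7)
The plan is to apply the direct method of the Calculus of Variations, relying on Lemma \ref{Lemma 3.1}, Theorem \ref{Th 3.5}, and the various lower semicontinuity properties collected in Section \ref{Sec_1}.

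First, since $\RR_{ad}\neq\emptyset$ and for each $\rho\in\RR_{ad}$ the variational solution discussed after Definition~\ref{Def 2.7} belongs to $\Xi_W$ with finite energy (by the energy equality \eqref{2.12a} and the bound $\xi_1\le\rho\le\xi_2$ with $\xi_1^{-1}\in L^1$), the infimum
$$\mu:=\inf_{(\rho,y)\in\Xi_W}I(\rho,y)$$
is finite and nonnegative. Take a minimizing sequence $\{(\rho_k,y_k)\}_{k\in\N}\subset\Xi_W$ so that $I(\rho_k,y_k)\to\mu$. From the three terms of $I$ we read off: $\sup_k\int_\Omega|D\rho_k|<+\infty$, $\sup_k\int_\Omega|\nabla y_k|^2\rho_k\,dx<+\infty$, and (using $\|y_d\|_{L^2(\Omega)}<+\infty$ and the triangle inequality) $\sup_k\|y_k\|_{L^2(\Omega)}<+\infty$. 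Combining this with $\int_\Omega\rho_k\,dx=m$ we obtain the $BV$-bound $\sup_k\|\rho_k\|_{BV(\Omega)}<+\infty$, so $\{(\rho_k,y_k)\}$ is bounded in the sense of Definition~\ref{Def 3.26}.

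Next I would apply Lemma \ref{Lemma 3.1} to extract a (non-relabelled) subsequence which $\tau$-converges to some $(\rho^{opt},y^{opt})\in BV(\Omega)\times W^{1,1}(\Omega;\Gamma_D)$ with $y^{opt}\in W_{\rho^{opt}}$. Theorem \ref{Th 3.5} then gives $(\rho^{opt},y^{opt})\in\Xi_W$; in particular, passing to the limit in $\int_\Omega\rho_k\,dx=m$ (via the $L^1$-strong convergence from Lemma \ref{Lemma 3.1}) confirms the mass constraint, and the pointwise constraints $\xi_1\le\rho^{opt}\le\xi_2$ survive by \eqref{3.6}.

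It remains to establish the lower semicontinuity inequality $I(\rho^{opt},y^{opt})\le\liminf_{k\to\infty}I(\rho_k,y_k)$, which together with admissibility forces $I(\rho^{opt},y^{opt})=\mu$. I would treat the three pieces separately: (i) $\int_\Omega|y-y_d|^2\,dx$ is weakly lower semicontinuous on $L^2(\Omega)$, and $y_k\rightharpoonup y^{opt}$ weakly in $L^2(\Omega)$ by Definition \ref{Def 3.26a}(d); (ii) $\int_\Omega|\nabla y|^2\rho\,dx$ is handled by the lower semicontinuity property \eqref{1.10_0} in variable $L^2$-spaces, applied componentwise to $\nabla y_k\rightharpoonup\nabla y^{opt}$ in $L^2(\Omega,\rho_k dx)^N$; (iii) $\int_\Omega|D\rho|$ is lower semicontinuous under $L^1$-strong convergence by Proposition \ref{Prop 1.6}(i). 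Summing the three inequalities yields $I(\rho^{opt},y^{opt})\le\liminf_k I(\rho_k,y_k)=\mu$, so $(\rho^{opt},y^{opt})$ is a weak optimal solution.

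The only delicate point is step (ii): one must ensure that the variable-space lower semicontinuity \eqref{1.10_0} is available, i.e. that $\rho_k\,dx\stackrel{\ast}{\rightharpoonup}\rho^{opt}\,dx$ in $M_+(\Omega)$. This follows from $\rho_k\to\rho^{opt}$ in $L^1(\Omega)$ (Lemma \ref{Lemma 3.1}), which upgrades weak $L^1$ convergence to weak-$\ast$ convergence of measures against $C_0$ test functions on the bounded set $\Omega$. Everything else is a routine compilation of the tools already developed.
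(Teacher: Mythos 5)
Your proposal is correct and follows essentially the same route as the paper: direct method, boundedness of a minimizing sequence in the sense of Definition \ref{Def 3.26}, compactness via Lemma \ref{Lemma 3.1}, closedness of $\Xi_W$ via Theorem \ref{Th 3.5}, and lower $\tau$-semicontinuity of $I$ from \eqref{1.10_0} and Proposition \ref{Prop 1.6}. Your version merely spells out two points the paper leaves implicit (the upgrade from $\int_\Omega|D\rho_k|\le C$ plus $\int_\Omega\rho_k\,dx=m$ to the full $BV$-bound, and the term-by-term lower semicontinuity of the three pieces of $I$), which is fine.
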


\begin{proof}
Since the set of admissible controls $\RR_{ad}$ is nonempty the minimization problem \eqref{2.13} is regular (i.e. $\Xi_W\ne\emptyset$). Let $\{(\rho_k,y_k)\in\Xi_W\}_{k\in\N}$ be a minimizing sequence to \eqref{2.13}. From the inequality
\begin{equation}
\begin{array}{ll}
\ds\inf_{(\rho,y)\in\Xi_W}I(\rho,y)=\lim_{k\to\infty}\Big[\int_\Omega|y_k(x)-y_d(x)|^2\,dx\\
\ds\hskip2.9truecm+\int_\Omega|\nabla y_k(x)|^2\rho_k\,dx
+\int_\Omega |D\rho_k|\Big]<+\infty,
\end{array}
\end{equation}
there is a constant $C>0$ such that
$$\sup_{k\in\N}\|y_k\|_{L^2(\Omega)}\le C,
\quad\sup_{k\in\N}\|\nabla y_k\|_{L^2(\Omega,\rho_k dx)^N}\le C,
\quad\int_\Omega|D\rho_k|\le C.$$
Hence, in view of the definition of the class of admissible controls $\RR_{ad}$, the sequence $\{(\rho_k,y_k)\in\Xi_W\}_{k\in\N}$ is bounded in the sense of Definition \ref{Def 3.26}. Hence, by Lemma \ref{Lemma 3.1}
there exist functions $\rho^*\in BV(\Omega)$ and $y^*\in W_{\rho^*}$ such that, up to a subsequence, $(\rho_k,y_k)\stackrel{\tau}{\longrightarrow}(\rho^*,y^*)$. Since the set $\Xi_W$ is sequentially closed with respect to the $\tau$-convergence (see Theorem \ref{Th 3.5}), it follows that the $\tau$-limit pair $(\rho^*,y^*)$ is an admissible weak solution to the optimal control problem \eqref{2.2}--\eqref{2.6} (i.e. $(\rho^*,y^*)\in\Xi_W$). To conclude the proof it is enough to observe that by property \eqref{1.10_0} and Proposition \ref{Prop 1.6}, the cost functional $I$ is sequentially lower $\tau$-semicontinuous. Thus
$$I(\rho^*,y^*)\le\liminf_{k\to\infty}I(\rho_k,y_k)
=\inf_{(\rho,\,y)\in\,\Xi_W}I(\rho,y).$$
Hence $(\rho^*,y^*)$ is a weak optimal pair, and we come to the required conclusion.
\end{proof}

%%%%%%%%%%%%%%%%%%%%%%%%%%%%%%%%%%%%%%%%%%%%%%%%%%%%%%%%%%%%%%%%%
%%%%%%%%%%%%%%%%%%%%%%%%%%%%%%%%%%%%%%%%%%%%%%%%%%%%%%%%%%%%%%%%%

\end{document}